\documentclass{article}

\usepackage{arxiv}

\usepackage[utf8]{inputenc} 
\usepackage[T1]{fontenc}    
\usepackage{amsmath,amssymb,amsthm}
\usepackage{bm,bbm}
\usepackage{mathtools}
\usepackage{hyperref}       
\usepackage{url}            
\usepackage{amsfonts}       
\usepackage{microtype}      
\usepackage{graphicx}
\usepackage{doi}

\newtheorem{theorem}{Theorem}
\newtheorem*{theorem*}{Theorem}

\newtheorem{proposition}[theorem]{Proposition}
\newtheorem{corollary}[theorem]{Corollary}

\theoremstyle{remark}
\newtheorem{remark}[theorem]{Remark}

\newtheorem*{claim*}{\bf Assertion}

\title{The Holonomy of Optimal Mass Transport:\\ The Smooth Case}

\author{\href{https://orcid.org/0000-0000-0000-0000}{\includegraphics[scale=0.06]{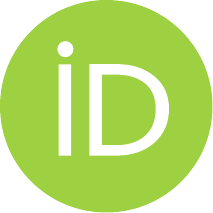}\hspace{1mm}Mahmoud  Abdelgalil} \\
	Mechanical and Aerospace Engineering\\
	University at Buffalo, SUNY\\
	Buffalo, NY 14260 \\
	\texttt{maabdelg@buffalo.edu} \\
	\And
	\href{https://orcid.org/0000-0000-0000-0000}{\includegraphics[scale=0.06]{orcid.pdf}\hspace{1mm}Tryphon T.~Georgiou} \\
	Mechanical and Aerospace Engineering\\
	University of California, Irvine\\
	Irvine, CA 92697\\
	\texttt{tryphon@uci.edu}\\
}

\date{}

\renewcommand{\headeright}{}
\renewcommand{\undertitle}{}
\renewcommand{\shorttitle}{}
\newcommand{\M}{M}
\newcommand{\Cinf}{C^{\infty}(\M)}
\newcommand{\cconv}{$\tfrac{1}{2}d^2$-convex}
\newcommand{\X}{\mathfrak{X}(\M)}
\newcommand{\XH}{\mathfrak{X}_H(\M)}
\newcommand{\Diff}{\mathrm{Diff}_0(\M)}

\DeclareMathOperator{\grad}{\mathrm{grad}}
\DeclareMathOperator{\expg}{\mathrm{exp}^{\text{$g$}}}
\DeclareMathOperator{\Div}{div}

\DeclareMathOperator{\Hess}{Hess}
\newcommand{\GOPT}{\mathcal{G}_{\mathrm{opt}}}
\newcommand{\FOPT}{\mathcal{F}_{\mathrm{opt}}}
\newcommand{\GPOT}{\mathcal{G}_{\mathrm{pot}}}
\newcommand{\FPOT}{\mathcal{F}_{\mathrm{pot}}}

\begin{document}
\maketitle

\begin{abstract}
We prove that, on a smooth manifold $\M$ equipped with a constant rank horizontal distribution $H$ and a smooth inner product $g_H$ on $H$, any horizontal vector field can be written as a linear combination of, at most, $3N$ Lie brackets of horizontal gradient fields of depth-$(1)$, where $N$ is the minimal immersion dimension of $M$, recovering the Riemannian case trivially when $H$ is the entire tangent bundle. When $H$ is also bracket-generating with a uniformly bounded step, we show that any vector field can be written as a linear combination of a uniformly bounded number of iterated Lie brackets of horizontal gradient fields with uniformly bounded depth, both bounds depending solely on the immersion dimension $N$ and the uniform upper bound on the step of $H$. Utilizing this, in conjunction with the Trotter property, we show that, if the manifold is compact, connected, and without boundary, the group generated by flows of horizontal gradient fields is dense in the identity component of the diffeomorphism group. When the manifold is also Riemannian, we show that the same density statement holds for the group generated by diffeomorphic optimal mass transport maps.
\end{abstract}

\keywords{Optimal Mass Transport, Gradient Vector fields, Group of Diffeomorphisms.}

\section{Introduction}
 
Given two smooth positive probability measures\footnote{A smooth positive probability measure is a probability measure that admits a smooth nowhere vanishing density.} $\nu_{1}$ and $\nu_{2}$
on a compact, connected, Riemannian manifold $\M$, the classical Optimal Mass Transport (OMT) problem seeks a map that minimizes the transport cost
\begin{align}\label{eq:mongecost}
    \varphi \;\longmapsto\; \frac{1}{2}\int_{\M}
    d\bigl( x,\, \varphi(x) \bigr)^{2}\, \mathrm{d}\nu_{1}(x),
\end{align}
where $d(\cdot,\cdot)$ denotes the Riemannian distance, among all measurable maps
satisfying the constraint
\begin{align}\label{eq:pushconstraint}
    \varphi_{\sharp}\nu_{1} = \nu_{2},
\end{align}
i.e., such that $\nu_{2}$ is the pushforward measure of $\nu_{1}$ under
$\varphi$. The minimizer of \eqref{eq:mongecost} subject to
\eqref{eq:pushconstraint} exists, and is $\nu_1$-almost everywhere unique and characterized by McCann's theorem \cite{McCann2001} as
\begin{align}\label{eq:mccannmap}
    \varphi_\star^{1\mapsto2} = \expg\bigl( \grad \phi_\star^{1\mapsto2}\bigr),
\end{align}
for a $\tfrac{1}{2}d^{2}$-convex\footnote{See the definition of $\tfrac{1}{2}d^{2}$-concave functions in \cite{McCann2001}, from which a function $\psi$ is defined as \cconv{} if $-\psi$ is $\tfrac{1}{2}d^{2}$-concave.} potential $\phi_\star^{1\mapsto2}$, where, for any $X\in\mathfrak{X}(\M)$, the map $\expg(X):x\mapsto\exp_x^g(X(x))$ is the Riemannian exponential map of $(\M,g)$.
The optimal map \eqref{eq:mccannmap} is the endpoint of the canonical interpolating family
\begin{align}\label{eq:displacement}
    \varphi^{1\mapsto2}_{s} := \expg\bigl( s \grad\phi^{1\mapsto2}_\star\bigr),
    \qquad s \in [0,1],
\end{align}
also known as McCann's
\emph{displacement interpolation} \cite{McCann1997}, and the induced curve of intermediate measures
$\nu_{s} := (\varphi^{1\mapsto2}_{s})_{\sharp}\nu_{1}$ turns out to be the unique constant-speed geodesic joining $\nu_{1}$ to $\nu_{2}$ in the $2$-Wasserstein space \cite{McCann2001,McCann1997}.
Under appropriate regularity hypotheses on the geometry of $\M$ \cite{MaTrudingerWang2005}, each measure $\nu_{s}$ admits a smooth positive density $\rho_s$, $\varphi_{s}^{1\mapsto2}$ is the optimal map from $\nu_{1}$ to $\nu_{s}$, and $\varphi^{1\mapsto2}_{s} \in \Diff$ for every
$s \in [0,1]$, where $\Diff$ denotes the \emph{identity component} of the diffeomorphism group of
$\M$. Moreover, the velocity field of \eqref{eq:displacement} is the gradient of a time-varying potential $\phi_{s}$ that solves the Hamilton--Jacobi equation
\begin{align}\label{eq:HJB}
    \partial_{s}\phi_{s} + \tfrac{1}{2}\|\grad\phi_{s}\|^{2} = 0, \qquad \phi_{0} = \phi^{1\mapsto2}_\star.
\end{align}

If we introduce another smooth positive probability measure $\nu_3$, then, in general,
$
\varphi^{1\mapsto3}_\star \neq \varphi^{2\mapsto3}_\star\circ\varphi^{1\mapsto2}_\star,
$
i.e., the class of solutions of the OMT problem is not closed under composition. A sharper formulation is obtained by considering the map
$
    \eta :=\varphi^{3\mapsto1}_\star \circ \varphi^{2\mapsto3}_\star\circ\varphi^{1\mapsto2}_\star,
$
which, under the same regularity hypotheses, is an element of $\Diff$, and, by construction, preserves $\nu_1$. In general, however, $\eta\neq \mathrm{id}$, i.e., traversing a closed triangle of optimal maps returns the measure to itself while rearranging the underlying manifold in a non-trivial way. In \cite{AbdelgalilGeorgiou2024,AbdelgalilGeorgiou2025,AbdelgalilGeorgiou2026}, we provide a precise characterization of the extent to which $\eta$ fails to be the identity in the finite-dimensional context of non-degenerate Gaussian distributions on $\mathbb{R}^n$. An equivalent problem was studied, without the OMT context, by Ballantine \cite{Ballantine1970}, who proved that every matrix of positive determinant is a product of at most five symmetric positive definite
factors, with the bound independent of the dimension.

Herein, our main motivation has been to investigate whether a
similar statement holds in the infinite-dimensional setting. Specifically, we shall prove the following statement (see Theorem \ref{thm:POT_OPT} below):
\begin{quote}
Suppose that $\M$ is a smooth, compact, connected Riemannian manifold without boundary. Then, every
$\varphi \in \Diff$ is the limit, in the $C^\infty$-topology, of finite compositions of OMT maps.
\end{quote}
As discussed above, the velocity field associated with diffeomorphic displacement interpolation is the gradient of a time-varying potential (cf.
\eqref{eq:displacement} and the Hamilton--Jacobi equation \eqref{eq:HJB} that follows it). Consequently, the reachable set of diffeomorphisms obtained by concatenating diffeomorphic displacement interpolations is related to the controllability properties of the infinite-dimensional nonholonomic right-invariant control system
\begin{align}\label{eq:controlsystem}
    \dot{\varphi} = \grad\phi_{t} \circ \varphi,
    \qquad \varphi(0) = \mathrm{id},
\end{align}
on $\Diff$, where $t\mapsto\phi_{t}$, a piece-wise continuous curve of smooth potentials, is the control input. In addition, similar to the finite dimensional case \cite{AbdelgalilGeorgiou2025}, controllability properties of \eqref{eq:controlsystem} pertain to the nature of the \emph{holonomy} group of Otto's principal bundle construction \cite{otto2001geometry}, \cite[Appendix A.5]{WendtKhesin2009}. As is well-known, however, the infinite-dimensional setting deprives us of the Chow--Rashevsky theorem. In addition, the space of gradient fields is not a module over smooth functions, so the powerful results in \cite{AgrachevCaponigro2009,ArguillereTrelat2017} do not apply to \eqref{eq:controlsystem}. 

Nevertheless, it turns out that \eqref{eq:controlsystem} is \emph{approximately controllable}, i.e., the reachable set of \eqref{eq:controlsystem} for time-varying gradient fields is dense in $\Diff$. In fact, a stronger statement holds: the group generated by flows of \emph{horizontal} gradient fields \cite{KhesinLee2009}, when the horizontal distribution is bracket-generating and has a uniformly bounded step, is dense in $\Diff$. In conjunction with the strong Trotter property \cite{Glockner2015}, this implies the same density statement for the group generated by diffeomorphic OMT maps.
Whether every element of $\Diff$ is a finite composition, rather than a limit, of diffeomorphic OMT maps, which is an open problem, is one possible generalization of Ballantine's problem \cite{Ballantine1970} to the nonlinear infinite-dimensional setting, the other being whether every element of $\Diff$ is a finite composition of flows of gradient fields. The two versions are equivalent in the Gaussian-Linear case which is thoroughly investigated by the authors in
\cite{AbdelgalilGeorgiou2024,AbdelgalilGeorgiou2025,AbdelgalilGeorgiou2026} and has been key in resolving a long-standing open question in linear systems theory \cite{AbdelgalilGeorgiou2026TAC}. A positive answer with uniform bounds on the number of factors would be key to an infinite-dimensional control problem posed in \cite{Brockett2007} and partially solved in \cite{AbdelgalilGeorgiou2026TAC}.

\subsection*{Note on bibliography}
The questions in this manuscript have been formulated, and intermittently pursued, by the authors since the finite-dimensional version was resolved \cite{AbdelgalilGeorgiou2024,AbdelgalilGeorgiou2025,AbdelgalilGeorgiou2026}. After completion of the manuscript, we became aware of the recent arXiv preprint by Kazandjian,
Mohsen, and Pozzoli \cite{KazandjianMohsenPozzoli2026}, which studies the Lie algebra generated by gradient fields.  Utilizing elliptic-operator methods and a theorem of Grabowski, the authors of \cite{KazandjianMohsenPozzoli2026} prove, for a compact manifold equipped with a non-degenerate symmetric
bilinear form, that \emph{iterated} Lie brackets of gradient fields of some finite, but \emph{unspecified}, depth-$(m)$ span $\X$, the space of smooth vector fields. 
Our results were obtained independently, are more elementary, have been motivated by different problems, and are sharper than \cite{KazandjianMohsenPozzoli2026} on multiple accounts. Specifically, our results do not require compactness of $\M$ and equally apply to manifolds with boundary. In addition, our results apply to the sub-Riemannian case with an explicit uniform upper bound on the depth and the number of brackets, and, in the Riemannian setting, we show that any vector field is in the linear span of, at most, $3N$ \emph{depth-$(1)$} Lie brackets of gradient fields, where $N$ is the minimal immersion dimension of $\M$. Finally, our results can be extended to pseudo-Riemannian manifolds, since the key identity \eqref{eq:absorption} holds for arbitrary symmetric non-degenerate bilinear forms.

\subsection*{Disclosure}
The absorption identity \eqref{eq:absorption}, key to our main result, was produced in interaction with Claude Fable 5 (Anthropic, 2026), and seems to be novel. The authors have verified the mathematical details and are solely responsible for all content.

\section{Preliminaries}

Unless otherwise stated, $\M$ is a smooth $n$-dimensional manifold equipped with a smooth constant rank distribution, i.e., a vector sub-bundle $H\subseteq T\M$, and a smooth inner product $g_H$ on $H$. When $H$ is bracket-generating, the triple $(\M,H,g_H)$ defines a sub-Riemannian manifold. For standard definitions and regularity notions for sub-Riemannian manifolds, e.g., the \emph{step} of $H$, we refer the reader to the comprehensive treatment \cite{AgrachevBarilariBoscain2020}. 
We write $\Cinf$ for smooth functions, $\X$ for smooth vector fields on $\M$, $\XH\subset\X$ for smooth \emph{horizontal} vector fields, i.e., sections of $H$, and $\grad_H\psi$ for the \emph{horizontal gradient} of $\psi\in\Cinf$ with respect to $g_H$, i.e., the unique horizontal vector field satisfying
\begin{align}
    d\psi(X) = g_H(\grad_H\psi,X),
\end{align}
for all $X\in\XH$. The \emph{depth} of an iterated Lie bracket is defined as the number of brackets in its expression, i.e., for any vector fields $X,Y\in \X$, $X$ is depth-$(0)$, $[X,Y]$ is depth-$(1)$, $[X,[X,Y]]$ is depth-$(2)$, and so on. Any smooth inner product $g_H$ on $H$ admits a (not necessarily unique) smooth extension to a Riemannian metric on $\M$. Fixing any such extension $g$, we use $\grad$, $\Hess$, $\nabla$, $\Div$, $\Delta$, and $\expg$ to denote the corresponding Riemannian gradient, Hessian, Levi-Civita connection, divergence, Laplace--Beltrami operator, and exponential map, respectively.
When $\M$ is compact, connected, and without boundary, the identity component of the group of smooth diffeomorphisms, denoted by $\Diff$, is a Fr\'echet Lie group with the standard $C^\infty$-topology \cite{Hamilton1982}. The exponential map of $\Diff$ is the map $X\mapsto \mathrm{e}^{X}$, for $X\in\mathfrak{X}(\M)$, where $\mathrm{e}^{X}$ is the solution at time $t=1$ of
\begin{align}
    \dot{\varphi} = X\circ \varphi, \qquad \varphi(0) = \mathrm{id},
\end{align}
which, when $\M$ is compact, exists, is unique, and is an element of $\Diff$. For a given $\varphi\in\Diff$, we use the notation $\varphi^{\circ k}$ to denote the $k$-times composition of $\varphi$ with itself, i.e., 
\begin{align*}
    \varphi^{\circ k}:= \underbrace{\varphi \circ \cdots \circ \varphi}_{k\text{-times}}.
\end{align*}
For a given family of diffeomorphisms $\mathcal{F}\subset\Diff$, we use $\langle \mathcal{F}\rangle$ to denote the subgroup generated by $\mathcal{F}$, i.e., 
\begin{align*}
    \langle \mathcal{F}\rangle :=\{\varphi_1\circ\varphi_2\circ\cdots\circ \varphi_k ~|~ k \in\mathbb{N}, \text{ and, for all  $i\leq k$, } \varphi_i\in \mathcal{F}, \text{ or } \varphi_i^{-1}\in \mathcal{F}\},
\end{align*}
equipped with the subspace topology, and we use $\overline{\mathcal{G}}$ to denote the closure of a subgroup $\mathcal{G}$ in $\Diff$. 

\section{Main Results}
Our first result identifies a crucial property of the real linear span of brackets of horizontal gradient fields.
\begin{proposition}\label{prop:bracket} Let $\Psi:\M\rightarrow\mathbb{R}^{N}$ be a smooth immersion. Then, every $X_H\in \XH$ admits the decomposition
$$
X_H = \sum_{i=1}^{3N} [\grad_H\phi_{2i-1}, \grad_H\phi_{2i}],
$$
for some collection of functions $\{\phi_i\}_{i=1}^{6N} \subset \Cinf$.
\end{proposition}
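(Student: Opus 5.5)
The plan is to reduce Proposition~\ref{prop:bracket} to a local algebraic identity for a \emph{single} scalar function, and then use the immersion to write a general horizontal field as a sum of $N$ simple pieces.

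\emph{Step 1: reduction to $N$ pieces.} Write $\Psi=(\Psi_1,\dots,\Psi_N)$. Since $\Psi$ is an immersion, the differentials $d\Psi_k$ span $T^*_x\M$ at every point. Hence the horizontal gradients $\grad_H\Psi_k$ span $H_x$ at every $x$, and the bundle endomorphism
\begin{align*}
A:=\sum_{k=1}^N \grad_H\Psi_k\otimes g_H(\grad_H\Psi_k,\cdot\,)
\end{align*}
of $H$ is symmetric and positive definite, so it is smoothly invertible. Setting $a_k:=g_H(\grad_H\Psi_k,A^{-1}X_H)\in\Cinf$ gives
\begin{align*}
X_H=\sum_{k=1}^N a_k\,\grad_H\Psi_k .
\end{align*}
It therefore suffices to prove the following: for any $a,\psi\in\Cinf$, the field $a\,\grad_H\psi$ is a sum of three brackets $[\grad_H u,\grad_H v]$ with $u,v$ built explicitly from $a$ and $\psi$. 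Here $\psi=\Psi_k$ ranges over the components of the immersion.

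\emph{Step 2: the absorption identity.} This step uses only $\grad_H(uv)=u\,\grad_H v+v\,\grad_H u$ and $[fY,Y]=-(Yf)\,Y$. Write $Y=\grad_H\psi$ and $Z=\grad_H f$. Expanding, I expect
\begin{align*}
[\grad_H f,\grad_H(\tfrac12\psi^2)]+[\grad_H\psi,\grad_H(f\psi)]
=2\,g_H(Z,Y)\,Y+|Y|^2_{g_H}\,Z .
\end{align*}
The brackets $\psi[Z,Y]$ cancel between the two terms, so what remains is first order in $f$. This cancellation is what I take to be the ``absorption'' phenomenon. A third bracket is then chosen to remove the unwanted term $|Y|^2 Z$ and leave a pure multiple $c\,Y$. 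The first candidate is a bracket of the form $[\grad_H F(\psi),\grad_H(f\,G(\psi))]$, with $F,G$ polynomials in $\psi$ of low degree; its expansion again produces only terms proportional to $Y$ and to $Z$. One then solves a small linear system in the coefficients so that the $Z$-terms cancel.

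\emph{Step 3: solving for $f$.} After Step 2 one has an identity of the form $(\text{three brackets})=\mathcal L_\psi f\;Y$, where $\mathcal L_\psi f$ is a first-order expression in $f$, for example a multiple of $g_H(\grad_H f,\grad_H\psi)$ plus a zeroth-order term. One must then solve $\mathcal L_\psi f=a$ globally on $\M$. This is the main obstacle, because it is a transport equation along the flow of $\grad_H\psi$, and such an equation can fail to be solvable when that flow has recurrent orbits or zeros.

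The way out I would try first is to arrange, through the choice of $F,G$ in Step 2, that the zeroth-order term dominates. Concretely, use the freedom to replace $\psi$ by $\psi+c$ or by $e^{\lambda\psi}$. This changes the family $\{\grad_H\Psi_k\}$ only by pointwise nonzero factors, which Step 1 tolerates. With a suitable such choice, $\mathcal L_\psi f=a$ becomes purely algebraic, say $f=a/(\text{nonvanishing function})$, or at least becomes an equation with an explicit global solution.

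If this works, summing over $k=1,\dots,N$ gives exactly $3N$ brackets and $6N$ functions, as in the statement. The whole argument is pointwise algebraic, so it uses neither compactness nor the absence of boundary.
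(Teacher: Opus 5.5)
There is a genuine gap, and it is in Step~3: the way out you propose cannot work. With $Y=\grad_H\psi$ and $Z=\grad_H f$, every bracket of the type you consider expands as
\begin{align*}
\bigl[\grad_H F(\psi),\grad_H(fG(\psi))\bigr] = F'G\,[Y,Z] + F'G'\,|Y|^2 Z + (F'G'-F''G)\,g_H(Z,Y)\,Y + (F'G''-F''G')\,f\,|Y|^2\,Y,
\end{align*}
where $|Y|^2=g_H(Y,Y)$ and $F,G$ and their derivatives are evaluated at $\psi$. Once the $Z$-terms are cancelled, you are left with $\mathcal L_\psi f=\beta(\psi)\,g_H(\grad_H f,Y)+h(\psi)\,|Y|^2 f$.

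The zeroth-order coefficient therefore always carries the factor $|\grad_H\psi|^2$. This factor vanishes wherever $\grad_H\psi$ does, e.g.\ at the critical points of $\psi$, which always exist when $\M$ is compact without boundary. Replacing $\psi$ by $\psi+c$ or $e^{\lambda\psi}$ does not remove these zeros.

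Your coefficients $a_k=g_H(\grad_H\Psi_k,A^{-1}X_H)$ vanish there only to first order, while $|\grad_H\Psi_k|^2$ vanishes to second order. So the algebraic solution $f=a_k/(h\,|\grad_H\Psi_k|^2)$ is unbounded in general, e.g.\ near a nondegenerate critical point in the Riemannian case. Keeping the first-order term instead leaves a transport equation along the horizontal gradient flow. That equation in general has no global smooth solution, because the values carried into a sink along different flow lines need not agree. So Step~3 does not deliver the decomposition.

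The missing idea is to absorb the unavoidable factor in Step~1 rather than fight it in Step~3. The field $\mathbb{D}(\Psi_k):=|\grad_H\Psi_k|^2\,\grad_H\Psi_k$ is parallel to $\grad_H\Psi_k$ and vanishes exactly where it does. Hence the $\mathbb{D}(\Psi_k)$ still span $H$ pointwise. Your positive-definite-operator argument, applied to $P=\sum_k \mathbb{D}(\Psi_k)\otimes\mathbb{D}(\Psi_k)^{\flat_H}$, then gives $X_H=\sum_k\alpha_k\,\mathbb{D}(\Psi_k)$ with smooth $\alpha_k$.

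Next, use the three bracket coefficients to kill all three unwanted terms $[Y,Z]$, $|Y|^2Z$ \emph{and} $g_H(Z,Y)\,Y$. This leaves exactly $f\,\mathbb{D}(\psi)$, with no equation to solve. It is the paper's identity
\begin{align*}
\phi\,\mathbb{D}(\psi) = -\tfrac14\bigl[\grad_H(\phi\psi^2),\grad_H\psi\bigr] - \tfrac1{12}\bigl[\grad_H\phi,\grad_H(\psi^3)\bigr] - \tfrac14\bigl[\grad_H(\psi^2),\grad_H(\phi\psi)\bigr],
\end{align*}
used with $\phi=\alpha_k$ and $\psi=\Psi_k$.

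Your expectation that the third bracket produces only $Y$- and $Z$-terms is also off. It produces $F'G\,[Y,Z]$ as well, so a single bracket added to your fixed pair cannot cancel $|Y|^2Z$. The coefficients of all three brackets have to be solved for jointly, as in the identity above.

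Your two-bracket computation in Step~2 and the count of $3N$ brackets are otherwise correct.
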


\begin{proof}
For any $\psi \in \Cinf$, let $\mathbb{D}(\psi)\in\mathfrak{X}(\M)$ denote the vector field defined by
\begin{align*}
    \mathbb{D}(\psi) := g_H(\grad_H \psi,\grad_H \psi)\, \grad_H\psi,
\end{align*}
and let $\{\psi_i\}_{i=1}^N$ be the component functions of the immersion $\Psi$. Define the endomorphism $P:H\rightarrow H$
\begin{align*}
    P:=\sum_{i=1}^{N} \mathbb{D}(\psi_i) \otimes \mathbb{D}(\psi_i)^{\flat_H},
\end{align*}
where $^{\flat_H}$ is the musical isomorphism induced by $g_H$. We claim that $P$ is smooth and admits a smooth inverse. To see this, note that, since $\Psi$ is an immersion, the differentials $\{\mathrm{d}\psi_i\}_{i=1}^{N}$ span the co-tangent bundle $T^{*}\M$ everywhere, and, therefore, the horizontal gradients $\{\grad_H\psi_i\}_{i=1}^{N}$ span the horizontal distribution $H$ everywhere. Recalling the definition of $\mathbb{D}(\psi)$ for a given $\psi\in\Cinf$, it is clear that the vector fields $\{\mathbb{D}(\psi_i)\}_{i=1}^{N}$ also span the horizontal distribution $H$ everywhere. Therefore, for
every $x\in\M$ and every $v\in H_x\backslash\{0\}$,
\begin{align*}
    g_H(x)(P(x)v,v) \;=\; \sum_{i=1}^{N} g_H(x)(\mathbb{D}(\psi_i)(x),v)^2
    \;>\; 0,
\end{align*}
so that $P$ is injective. Hence, $P$ is invertible and, by Cramer's rule, the inverse $P^{-1}$ is smooth. Consequently, every vector field $X_H \in \XH$ can be
decomposed as
\begin{align}\label{eq:wfields_decomp}
    X_H = PP^{-1}X_H=\sum_{i=1}^{N} \alpha_i\,\mathbb{D}(\psi_i),
    \qquad \alpha_i := g_H(\mathbb{D}(\psi_i),P^{-1}X_H),
\end{align}
where the coefficients $\alpha_i$ are smooth. We claim that each term in the summation \eqref{eq:wfields_decomp} is a linear combination of three Lie brackets of pairs of horizontal gradient fields with constant coefficients. Indeed, for all $\phi, \psi \in \Cinf$, we have that
\begin{equation}\label{eq:absorption}
\phi\, \mathbb{D}(\psi)
= -\tfrac{1}{4}\bigl[ \grad_H(\phi \psi^{2}),\, \grad_H \psi \bigr]
\;-\; \tfrac{1}{12}\bigl[ \grad_H \phi,\, \grad_H(\psi^{3}) \bigr]
\;-\; \tfrac{1}{4}\bigl[ \grad_H(\psi^{2}),\, \grad_H(\phi \psi) \bigr].
\end{equation}
To see this, note that, using the product rule for $\grad_H$ and elementary properties of the Lie Bracket, i.e.,
$$\grad_H(\phi \psi) = \phi\grad_H \psi + \psi \grad_H \phi, \qquad [X, \phi Y] = \phi [X, Y] + X(\phi)Y,$$ 
direct computation gives
\begin{align*}
\bigl[ \grad_H(\psi^{2}), \grad_H(\phi \psi) \bigr]
&= -2\psi^{2} [\grad_H \phi, \grad_H \psi] + 2\psi\,g_H(\grad_H \psi,\grad_H \psi) \grad_H \phi - 2\phi \,\mathbb{D}(\psi),\\
\bigl[ \grad_H \phi, \grad_H(\psi^{3}) \bigr]
&= 3\psi^{2} [\grad_H \phi, \grad_H \psi] + 6\psi\, g_H(\grad_H \phi,\grad_H \psi) \,\grad_H \psi,\\
\bigl[ \grad_H(\phi \psi^{2}), \grad_H \psi \bigr]
&= \psi^{2} [\grad_H \phi, \grad_H \psi] - 2\psi \,g_H(\grad_H \psi,\grad_H \psi)\grad_H \phi\\
&- 2\psi \, g_H(\grad_H \phi,\grad_H \psi)\grad_H \psi - 2\phi\, \mathbb{D}(\psi),
\end{align*}
which, after substituting into the right hand side of \eqref{eq:absorption} and simplifying, proves the identity.
Therefore, \eqref{eq:wfields_decomp} is a finite sum in which each term is, by virtue of \eqref{eq:absorption}, a linear combination of three Lie brackets of horizontal gradient fields with constant coefficients. Absorbing the coefficients into the functions, the conclusion follows.
\end{proof}

\begin{remark}
    We emphasize that Proposition \ref{prop:bracket} holds under fairly weak assumptions on $\M$ and $H$. In particular, it does not require compactness, and applies even when $\M$ has a non-empty boundary and $H\subsetneq T\M$. Moreover, it shows that the linear span, with constant coefficients only, of depth-$(1)$ Lie brackets of horizontal gradient fields is enough to express \emph{any} horizontal vector field \emph{without ever leaving the horizontal distribution}. Note that, in general, depth-$(1)$ Lie brackets of horizontal gradient fields may very well have non-horizontal components. When $H=T\M$, i.e., in the Riemannian case, $g_H=g$, $\grad_H=\grad$, and the proposition shows that any vector field can be decomposed into $3N$ depth-$(1)$ Lie brackets of pairs of gradient fields, where $N$ is the immersion dimension. The Whitney immersion theorem \cite[Theorem 6.18]{Lee2012} gives an upper bound of $N\leq 2n$ for manifolds with or without boundary. Sharper results, when applicable, may be invoked to supply the minimal immersion dimension. 
\end{remark}

\begin{remark}
When $H=T\M$, the content of Proposition~\ref{prop:bracket} is not merely that gradient fields are bracket generating, but that they are so \emph{exactly}, with finite sums, constant coefficients, and already at depth-$(1)$. Moreover, the required number of pairs of gradients is uniformly bounded in terms of the immersion dimension of $\M$ only. This result is remarkable since gradient fields are as far as possible from being a $\Cinf$-module. Nevertheless, Proposition \ref{prop:bracket} asserts that the module structure is recovered as soon as first-order Lie brackets are involved. It also asserts that the span of gradient fields is included in the span of Lie brackets of gradient fields. This is counterintuitive at a first glance, especially since it does not hold in the finite-dimensional counterpart \cite{AbdelgalilGeorgiou2025}. Nevertheless, it readily manifests if we compute the divergence of the bracket of two gradient fields. Indeed, for any $\psi_1,\psi_2\in\Cinf$, 
\begin{align*}
[\grad \psi_1,\grad \psi_2] = \Hess^\sharp \psi_2 (\grad \psi_1) - \Hess^\sharp\psi_1( \grad \psi_2), 
\end{align*}
where $\Hess^\sharp\psi$ is the unique $(1,1)$-tensor satisfying $g(\Hess^\sharp\psi (X),Y) = \Hess\psi(X,Y)$, for all $X,Y\in\X$ and $\psi\in\Cinf$. Computing the divergence of both sides then gives
\begin{align*}
    \Div([\grad \psi_1,\grad \psi_2]) = g(\grad \psi_1,\grad (\Delta \psi_2)) - g(\grad \psi_2,\grad (\Delta \psi_1)),
\end{align*}
which does not vanish generically, and, therefore, the Helmholtz decomposition of $[\grad \psi_1,\grad \psi_2]$ generally has a non-trivial gradient component. For example, if $\psi_1$ and $\psi_2$ are eigenfunctions of $\Delta$ with eigenvalues $\lambda_1$ and $\lambda_2$, then
\begin{align*}
    \Div([\grad \psi_1,\grad \psi_2]) = (\lambda_2-\lambda_1)g(\grad\psi_1,\grad\psi_2),
\end{align*}
which vanishes identically if and only if the eigenvalues coincide or the gradients are pointwise orthogonal.
\end{remark}

Our next result shows that, even if $H\subsetneq T\M$, every vector field can be written as a linear combination of a uniformly bounded number of iterated Lie brackets of horizontal gradient fields of a uniformly bounded depth, provided that $H$ is bracket-generating and has a uniformly bounded step.
\begin{proposition}\label{prop:bracket_full}
    Let $\Psi:\M\rightarrow\mathbb{R}^N$ be a smooth immersion, and $H$ be bracket-generating with step uniformly bounded by $r\geq 2$. Then, every $X\in \X$ can be written as a linear combination with constant coefficients of, at most,
    \begin{align*}
        3N\Big(1+\tbinom{N}{2}\,\tfrac{N^{\,r-1}-1}{N-1}\Big),
    \end{align*}
    iterated Lie brackets of horizontal gradient fields, each of, at most, depth-$(r)$.
\end{proposition}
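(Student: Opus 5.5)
The plan is to build a global spanning family of $T\M$ out of iterated brackets of the $N$ horizontal gradient fields $Y_i:=\grad_H\psi_i$, where $\psi_i$ are the components of $\Psi$. Smooth coefficients are then absorbed into the outermost horizontal factor of each bracket via Proposition~\ref{prop:bracket}. Note that $\tfrac{N^{r-1}-1}{N-1}=\sum_{k=2}^{r}N^{k-2}$. The count in the statement then reads as: $3N$ brackets for one aggregated horizontal term, plus $3N$ brackets for each of the $\tbinom{N}{2}N^{k-2}$ brackets of length $k=2,\dots,r$ of the form $B=[Y_{i_1},[Y_{i_2},\dots,[Y_{i_{k-1}},Y_{i_k}]\cdots]]$ with $i_{k-1}<i_k$.

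\emph{Step 1 (spanning).} As in the proof of Proposition~\ref{prop:bracket}, the $Y_i$ span $H$ pointwise. I claim that the family $\mathcal{B}$, consisting of the $Y_i$ together with all right-nested brackets $B$ above of length $2\le k\le r$, spans $T_x\M$ at every $x$. The argument: any horizontal field is a $\Cinf$-combination of the $Y_i$. Expanding an iterated bracket of horizontal fields by the Leibniz rule $[fX,gY]=fg[X,Y]+fX(g)Y-gY(f)X$ gives a $\Cinf$-combination of right-nested brackets of the $Y_i$ of length at most the original length. Right-nestedness can be assumed by the Jacobi identity, and the innermost pair can be ordered $i_{k-1}<i_k$ by antisymmetry (the diagonal terms vanish). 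Since the step is at most $r$ everywhere, brackets of length $\le r$ of horizontal fields span $T\M$, hence so does $\mathcal{B}$. Fixing a Riemannian extension $g$, the operator $P:=\sum_{V\in\mathcal{B}} V\otimes V^{\flat_g}$ is then pointwise positive definite. Exactly as before, $X=\sum_i a_iY_i+\sum_{B}b_B B$ with smooth coefficients $a_i$ and $b_B$.

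\emph{Step 2 (absorbing coefficients).} Write $B=[Y_a,W]$ with $W$ a right-nested bracket of depth $k-2$; for $k=2$, $W=Y_j$ is itself a gradient. Then
\begin{align*}
b_B B=[b_BY_a,W]+W(b_B)\,Y_a .
\end{align*}
The term $W(b_B)Y_a$ is horizontal and is collected together with $\sum_i a_iY_i$ into a single horizontal field $X_H'$. By Proposition~\ref{prop:bracket}, $X_H'$ is a sum of $3N$ depth-$(1)$ brackets of horizontal gradients. Also by Proposition~\ref{prop:bracket}, $b_BY_a=\sum_{l=1}^{3N}[\grad_H\phi_{2l-1},\grad_H\phi_{2l}]$. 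Hence $[b_BY_a,W]$ is a sum of $3N$ brackets $[[\grad_H\phi_{2l-1},\grad_H\phi_{2l}],W]$, each an iterated bracket of horizontal gradients of depth $1+(k-2)+1=k\le r$. Summing the counts gives $3N+3N\tbinom{N}{2}\sum_{k=2}^{r}N^{k-2}$, which is the stated bound.

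The main obstacle I expect is Step 1: I need to make sure that the right-nested brackets of the specific finite family $\{Y_i\}$, with lengths $\le r$, really span $T\M$ pointwise under only the step bound. The delicate points are the Leibniz and Jacobi bookkeeping (lower-order terms only produce brackets of smaller length, which are already in $\mathcal{B}$) and the uniformity needed for a global smooth inverse of $P$. I will handle the latter by noting that positive definiteness is pointwise, so Cramer's rule applies without compactness.
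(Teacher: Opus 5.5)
Your proposal is correct and follows essentially the same route as the paper. The paper uses the same spanning family of right-nested brackets of the $\grad_H\psi_i$ with ordered innermost pair, inverts the same operator $P=\sum V\otimes V^{\flat}$ via Cramer's rule, and applies the same Leibniz step $b_B[Y_a,W]=[b_BY_a,W]+W(b_B)\,Y_a$, invoking Proposition~\ref{prop:bracket} on $b_BY_a$ and on the lumped horizontal remainder, which yields the same count $3N+3N\tbinom{N}{2}\sum_{k=2}^{r}N^{k-2}$.
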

\begin{proof}
    Let $\{\psi_i\}_{i=1}^N$ be the components of $\Psi$, and, with $V_i:=\grad_H\psi_i$, define the Lie bracket monomials $\mathcal{V}_1:=\{V_1,\dots,V_N\},$  $\mathcal{V}_2:=\{[V_i,V_j]~|~i<j\}$, and
    \begin{align*}
        \mathcal{V}_{i+1}:=\{[V_j,V]~|~ 1\leq j\leq N, \,V\in\mathcal{V}_{i}\},
    \end{align*}
    for $i\geq 2$, then observe that, since $\Psi$ is an immersion, the elements of $\mathcal{V}_1$ span $H_x$ at every $x\in\M$. Consequently, similar to the proof of Proposition \ref{prop:bracket}, $\XH=\mathrm{span}_{\Cinf}\,\mathcal{V}_1$. Through repeated application of the Jacobi identity and the Leibniz rule, every iterated Lie bracket of horizontal vector fields with, at most, depth-$(r-1)$ can be written as a $\Cinf$-linear combination of $\mathcal{V}_{\leq r}:=\cup_{\ell=1}^r\mathcal{V}_\ell$. Consider the sequence $H=:H^{(1)}\subseteq H^{(2)} \subseteq \cdots \subseteq T\M$, where
    \begin{align*}
        H^{(i)}_x:= \mathrm{span}_{\mathbb{R}}\{V(x)~|~ V\in\mathcal{V}_{\leq i}\},
    \end{align*}
    for all $x\in\M$, and observe that $H_x^{(i)}$ is the $i$-th member of the flag of $H$ at $x$ \cite[Definition 10.1]{AgrachevBarilariBoscain2020}. Since the step of $H$ is uniformly bounded by $r$, we have that $H^{(r)}=T\M$. We claim that any $X\in\X$ admits the global decomposition
    \begin{align}
        X = \sum_{V\in\mathcal{V}_{\leq r}}\alpha^X_V V, \label{eq:pseudo_inverse}
    \end{align}
    for some functions $\{\alpha^X_V\}\subset\Cinf$. To see this, we let
    $P := \sum_{V\in\mathcal{V}_{\leq r}} V\otimes V^\flat,$
    where $^\flat$ is the musical isomorphism corresponding to any (fixed) Riemannian extension $g$ of the inner product $g_H$. For any $x\in \M$ and $v\in T_x\M\backslash\{0\}$,
    \begin{align*}
        g(x)(v,P(x)v) = \sum_{V\in\mathcal{V}_{\leq r}} g(x)(v,V(x))^2,
    \end{align*}
    which is strictly positive. Consequently, $P$ is a smooth injective endomorphism of $T\M$ and, by Cramer's rule, has a smooth inverse. Therefore, every vector field $X\in\X$ admits the decomposition
    \begin{align*}
        X = PP^{-1}X = \sum_{V\in\mathcal{V}_{\leq r}} g(V,P^{-1}X) V
    \end{align*}
    which is \eqref{eq:pseudo_inverse} with $\alpha_V^X=g(V,P^{-1}X)$.
    Let $X\in\X$ and compute a decomposition of the form \eqref{eq:pseudo_inverse}, rewritten here with the summation split by layers for convenience
    \begin{align}
        X = \sum_{\tilde{V}\in\mathcal{V}_{1}}\alpha_{\tilde{V}}^X \,\tilde{V} + \sum_{\tilde{V}\in\mathcal{V}_{2}}\alpha_{\tilde{V}}^X \,\tilde{V} + \cdots + \sum_{\tilde{V}\in\mathcal{V}_{r}}\alpha_{\tilde{V}}^X \,\tilde{V}. \label{eq:pseudo_inverse_2}
    \end{align}
    For all $i\geq 2$, each monomial $\tilde{V}\in \mathcal{V}_{i}$ in the summands of the decomposition \eqref{eq:pseudo_inverse_2} is of the form $[V_j,V]$ with $V\in \mathcal{V}_{i-1}$. Therefore, the Leibniz rule and Proposition~\ref{prop:bracket} give that
    \begin{align*}
        \alpha_{[V_j,V]}^X \,[V_j,V] = d\alpha_{[V_j,V]}^X(V) \, V_j + [\alpha_{[V_j,V]}^X\,V_j, V] =  d\alpha_{[V_j,V]}^X(V) \,V_j + \sum_{\ell=1}^{3N}[[\grad_H\phi_{2\ell-1},\grad_H\phi_{2\ell}], V], 
    \end{align*}
    where the functions $\{\phi_{\ell}\}_{\ell=1}^{6N}$ are those produced by Proposition~\ref{prop:bracket} for the horizontal vector field $\alpha_{[V_j,V]}^X\,V_j\in\XH$. Now observe that the second term in this decomposition is a summation of $3N$ iterated Lie brackets of horizontal gradient fields of depth-$(i)$ (recall that $\mathcal{V}_{i}$ is, by construction, a family of iterated Lie brackets of horizontal gradient fields of depth-$(i-1)$). In addition, the first term is a horizontal vector field that can be lumped with the first summation in the decomposition. Hence, if $k_i$ denotes the number of iterated Lie brackets of horizontal gradient fields sufficient to express the $i$-th summation in the decomposition \eqref{eq:pseudo_inverse_2} as a linear combination with constant coefficients, then Proposition~\ref{prop:bracket} gives that $k_1=3N$ and $k_{i}= 3N|\mathcal{V}_{i}|$, for all $i\in\{2,\dots,r\}$, where $|\mathcal{V}_{i}|$ is the cardinality of $\mathcal{V}_{i}$. Moreover, the maximum depth is achieved at the last layer, i.e., depth-$(r)$. It is clear that $|\mathcal{V}_2|\leq\tbinom{N}{2}$, and $|\mathcal{V}_{i+1}|\leq N|\mathcal{V}_i|$ for all $i\geq 2$. Therefore, the total number of iterated Lie brackets is
    \begin{align*}
        \sum_{i=1}^r k_{i} \leq 3N \left(1+\binom{N}{2}\sum_{\ell=2}^{r} N^{\ell-2}\right) = 3N \left(1+\binom{N}{2}\frac{N^{r-1}-1}{N-1}\right).
    \end{align*}
\end{proof}
With Proposition~\ref{prop:bracket} and Proposition~\ref{prop:bracket_full} in hand, we now prove our first density result, i.e., that the group generated by the flows of horizontal gradient fields is dense in the identity component of the diffeomorphism group. 
\begin{theorem}\label{thm:POT_Diff}
    Let $\M$ be compact, connected, and without boundary, and let $H\subseteq T\M$ be bracket generating. Then, with $\FPOT:=\{\mathrm{e}^{\grad_H \psi}~|~\psi\in\Cinf\}$ and $\GPOT:=\langle\FPOT\rangle$, we have that $\overline{\GPOT} = \Diff$.
\end{theorem}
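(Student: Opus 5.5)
Since $\overline{\GPOT}$ is a closed subgroup of $\Diff$, the plan is to show that it contains $\mathrm{e}^{X}$ for every $X\in\X$. Then we use that the subgroup generated by $\{\mathrm{e}^X\mid X\in\X\}$ is all of $\Diff$: it is a normal subgroup, because $\varphi\circ \mathrm{e}^X\circ\varphi^{-1}=\mathrm{e}^{\varphi_*X}$, and the identity component of the diffeomorphism group of a compact manifold is simple (Thurston), so it equals $\Diff$. Alternatively, any isotopy from $\mathrm{id}$ can be approximated in the $C^\infty$-topology by finite products of time-one flows of autonomous fields, by freezing the time-dependent velocity on small subintervals. Either way, it suffices to show $\mathrm{e}^X\in\overline{\GPOT}$ for all $X\in\X$.

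To do this, define $\mathfrak{g}:=\{X\in\X \mid \mathrm{e}^{tX}\in\overline{\GPOT}\ \forall t\in\mathbb{R}\}$. We need three facts about $\mathfrak{g}$, all derived from the Trotter and commutator product formulas on $\Diff$, which converge in the $C^\infty$-topology on a compact manifold (the strong Trotter property of \cite{Glockner2015}).
\begin{align*}
\mathrm{e}^{t(X+Y)}&=\lim_{k\to\infty}\bigl(\mathrm{e}^{tX/k}\circ \mathrm{e}^{tY/k}\bigr)^{\circ k},\\
\mathrm{e}^{t^2[X,Y]}&=\lim_{k\to\infty}\bigl(\mathrm{e}^{tX/k}\circ \mathrm{e}^{tY/k}\circ \mathrm{e}^{-tX/k}\circ\mathrm{e}^{-tY/k}\bigr)^{\circ k^2},
\end{align*}
The sign convention in the second formula may be off, but sign is harmless. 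From these we get:
\begin{enumerate}
\item $\mathfrak{g}$ is closed under real linear combinations, by the Trotter formula and the fact that scalar multiples are clear.
\item $\mathfrak{g}$ is closed under Lie brackets, by the commutator formula; negative $t^2$ is obtained by swapping $X$ and $Y$.
\item $\grad_H\psi\in\mathfrak{g}$ for every $\psi$, since $t\grad_H\psi=\grad_H(t\psi)$ and $\mathrm{e}^{\grad_H(t\psi)}\in\FPOT$.
\end{enumerate}
Since $\overline{\GPOT}$ is closed, the limits stay inside it. Hence $\mathfrak{g}$ is a Lie subalgebra containing all horizontal gradients.

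Now invoke Proposition~\ref{prop:bracket_full}. By Whitney, $M$ immerses into some $\mathbb{R}^N$. By compactness, the bracket-generating distribution $H$ has step uniformly bounded by some $r$, since the step is upper semicontinuous, hence locally bounded, and finitely many neighborhoods cover $M$. Therefore every $X\in\X$ is a finite constant-coefficient combination of iterated brackets of horizontal gradients, and so $X\in\mathfrak{g}$. (If $H=T\M$, Proposition~\ref{prop:bracket} together with the Whitney bound already suffices.) Thus $\mathrm{e}^X\in\overline{\GPOT}$ for all $X$, and $\overline{\GPOT}=\Diff$.

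The main obstacle is the rigorous justification of the $C^\infty$ convergence of the Trotter and commutator formulas in the Fréchet group $\Diff$, which is not Banach. I would rely on \cite{Glockner2015}, which establishes the strong Trotter property (and the commutator version) for $\Diff$ with $\M$ compact. A secondary point is the final step from flows of autonomous fields to all of $\Diff$, which I would settle via Thurston's simplicity theorem, or more elementarily via the time-freezing approximation of isotopies.
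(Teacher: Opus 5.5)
Your proposal is correct and follows essentially the same route as the paper. The paper also works with the Lie subalgebra of fields whose one-parameter groups lie in $\overline{\GPOT}$. It gets closure under sums and brackets from the Trotter and commutator formulas of \cite{Glockner2015}, and applies Proposition~\ref{prop:bracket_full} (or Proposition~\ref{prop:bracket} when $H=T\M$) after bounding the step by compactness. It then uses Thurston's simplicity theorem to pass from $\langle \mathrm{e}^{\X}\rangle$ to $\Diff$. Your alternative time-freezing argument for that last step is also valid and avoids Thurston, because $\overline{\GPOT}$ is closed and so you only need $\Diff$ to lie in the closure of finite products of autonomous time-one flows.
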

\begin{proof}
The closure of a subgroup in a topological group is a subgroup. Since $\Diff$ is closed in the full group of diffeomorphisms, and $\GPOT \subseteq \Diff$, we have that $\overline{\GPOT}$ is a closed
subgroup of $\Diff$. Define the family of vector fields
$$
\mathfrak{L} :=\left\{ X \in \X : \mathrm{e}^{tX} \in \overline{\GPOT}
\text{ for all } t \in\mathbb{R}\right\},
$$
which is closed under real scaling by definition, and clearly contains horizontal gradients, i.e.,
\begin{align*}
    \left\{ \grad_H\phi ~:~ \phi\in\Cinf\right\}\subset \mathfrak{L}.
\end{align*}
By the Trotter property \cite[pp. 7-8]{Glockner2015}, we have that 
$$
\mathrm{e}^{t(X + Y)} = \lim_{k\rightarrow\infty} \left(\mathrm{e}^{k^{-1}t X}\circ\mathrm{e}^{k^{-1}t Y}\right)^{\circ k},
$$ 
which implies that $\mathfrak{L}$ is closed under addition, and by the commutator property \cite[pp. 7-8]{Glockner2015}
$$
\mathrm{e}^{-t^2[X, Y]} = \mathrm{e}^{t^2[Y, X]} = \lim_{k\rightarrow\infty} \left( \mathrm{e}^{k^{-1}t X}\circ\mathrm{e}^{k^{-1}t Y}\circ
\mathrm{e}^{-k^{-1}t X}\circ\mathrm{e}^{-k^{-1}t Y} \right)^{\circ k^2},
$$ 
which implies that $\mathfrak{L}$ is closed under the Lie bracket. Hence, $\mathfrak{L}$ is a Lie sub-algebra of $\X$ containing horizontal
gradient fields. If $H=T\M$, then $\mathfrak{L} = \X$ by
Proposition~\ref{prop:bracket}. If $H\subsetneq T\M$, then $\mathfrak{L} = \X$ by Proposition~\ref{prop:bracket_full}, since bracket generation and compactness of $\M$ imply that the step of $H$ is uniformly bounded. Indeed, for every $i$, the set of points at which the step of $H$ is, at
most, $i$ is open, since it is characterized by the non-vanishing of a determinant of values of the monomials $\mathcal{V}_{\leq i}$ in the proof of Proposition \ref{prop:bracket_full}. Therefore, by bracket generation, these sets cover $\M$, and, by compactness, finitely many
suffice, so that the step of $H$ is uniformly bounded by some $r\geq 2$. From the definition of $\mathfrak{L}$, it follows that the group $\overline{\GPOT} $ contains the family of time-one flows of vector fields
$$
\mathrm{e}^{\X} := \{ \mathrm{e}^{X} : X \in \X \},
$$ 
and, therefore, $\langle\mathrm{e}^{\X}\rangle\subseteq\overline{\GPOT}$. Because $\Diff$ is a simple group under the assumptions of the theorem \cite{Thurston1974}, and because $\langle\mathrm{e}^{\X}\rangle$ is a non-trivial normal subgroup, $\langle\mathrm{e}^{\X}\rangle = \Diff$ and, hence, $\overline{\GPOT}=\Diff$. 
\end{proof}
As an immediate consequence of the above, we have the following result.
\begin{corollary}
    In the setting of Theorem \ref{thm:POT_Diff}, the right-invariant control system 
    \begin{align*}
        \dot{\varphi} = \grad_H\phi_t\circ\varphi,
    \end{align*}
    is approximately controllable on $\Diff$.
\end{corollary}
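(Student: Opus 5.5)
The corollary should follow directly from Theorem~\ref{thm:POT_Diff}. The plan is to identify the reachable set of the control system with the group $\GPOT$, or at least with a set that contains $\GPOT$, and then invoke density. I take approximate controllability on $\Diff$ to mean that the reachable set from $\mathrm{id}$, over all finite times and all piecewise-constant (hence piecewise-continuous) curves of potentials $t\mapsto\phi_t$, is dense in $\Diff$ in the $C^\infty$-topology. By right-invariance, reachability from an arbitrary initial point $\varphi_0$ is just the reachable set from $\mathrm{id}$ composed on the right with $\varphi_0$. So it suffices to treat $\mathrm{id}$, and to note that right composition is a homeomorphism of $\Diff$, which preserves density.

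First I would show that every element of $\GPOT$ is reachable. Apply a constant control $\phi_t\equiv\psi$ on an interval of length $1$. The solution of $\dot\varphi=\grad_H\psi\circ\varphi$ with $\varphi(0)=\mathrm{id}$ is then $\mathrm{e}^{t\grad_H\psi}$, so the endpoint is $\mathrm{e}^{\grad_H\psi}$; this flow exists globally because $\M$ is compact.

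Next I handle inverses. Since $\grad_H(-\psi)=-\grad_H\psi$, the constant control $-\psi$ yields $\mathrm{e}^{-\grad_H\psi}=(\mathrm{e}^{\grad_H\psi})^{-1}$. Hence inverses of generators are also reachable with admissible controls.

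Then I handle products. I concatenate controls in time: apply $\psi_k$ on $[0,1]$, then $\psi_{k-1}$ on $[1,2]$, and so on up to $\psi_1$. Right-invariance makes the solution on each successive interval the new flow composed with the previous endpoint. The final endpoint is therefore $\mathrm{e}^{\grad_H\psi_1}\circ\cdots\circ\mathrm{e}^{\grad_H\psi_k}$, which is an arbitrary word in $\FPOT\cup\FPOT^{-1}$. The concatenated control is piecewise constant, so it is admissible.

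The reachable set therefore contains $\GPOT$. Theorem~\ref{thm:POT_Diff} gives $\overline{\GPOT}=\Diff$, so the reachable set is dense, which is approximate controllability.

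The only mildly delicate point is the bookkeeping of composition order under right-invariance, which is checked by the same concatenation argument. If the intended notion of controllability fixes a finite time horizon, I would add a rescaling step: replacing $\psi$ by $c\psi$ with $c>0$ turns the flow over time $1/c$ into $\mathrm{e}^{\grad_H\psi}$. Any finite word can thus be realized within any prescribed positive time.
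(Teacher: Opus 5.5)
Your proof is correct and is essentially the argument the paper leaves implicit when it calls the corollary an immediate consequence of Theorem~\ref{thm:POT_Diff}: constant controls $\pm\psi$ realize $\mathrm{e}^{\pm\grad_H\psi}$, and concatenating them in time realizes arbitrary words, so the reachable set contains the dense subgroup $\GPOT$. Your handling of the composition order under right-invariance and your time-rescaling remark are both accurate.
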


We now utilize Theorem \ref{thm:POT_Diff}, specialized to the Riemannian case, to prove that the group generated by diffeomorphic OMT maps is dense in $\Diff$. To that end, we have the following result. 
\begin{theorem}\label{thm:POT_OPT}
    Let $\M$ be a compact, connected Riemannian manifold without boundary, and let
    \begin{align*}
        \FOPT&:=\left\{\expg(\grad\phi) ~|~\phi\in\Cinf\text{ is \cconv{}}\right\}\cap\Diff.
    \end{align*}
    Then, with $\GOPT:=\langle\FOPT\rangle$, we have that $\overline{\GOPT} = \Diff$.
\end{theorem}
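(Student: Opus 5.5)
The strategy is to reduce the statement to Theorem~\ref{thm:POT_Diff} with $H=T\M$. By that theorem $\overline{\GPOT}=\Diff$, so it suffices to show $\FPOT\subseteq\overline{\GOPT}$; then $\GPOT\subseteq\overline{\GOPT}$, and since $\overline{\GOPT}$ is a closed subgroup of $\Diff$, we get $\overline{\GOPT}\supseteq\overline{\GPOT}=\Diff$. Equivalently, for every $\psi\in\Cinf$ the time-one flow $\mathrm{e}^{\grad\psi}$ should be a $C^\infty$-limit of compositions of maps in $\FOPT$. For this we approximate the flow by a product of short exponential-map steps, using a Trotter/Euler-type product formula.

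\emph{Step 1: small multiples of smooth potentials give OMT diffeomorphisms.} For any $\psi\in\Cinf$ there is $\epsilon_0>0$ such that for $|s|<\epsilon_0$:
\begin{itemize}
\item the function $s\psi$ is \cconv{};
\item the map $\varphi_s:=\expg(s\grad\psi)$ lies in $\Diff$.
\end{itemize}
The second point holds because $\varphi_s$ is $C^1$-close to $\mathrm{id}$, hence a diffeomorphism. It is isotopic to the identity through $\tau\mapsto\expg(\tau s\grad\psi)$, so it lies in the identity component. For the first point, I would use the standard fact that a $C^2$-small function on a compact manifold is $\tfrac12 d^2$-convex. Concretely, if $\|s\psi\|_{C^2}$ is small compared with the injectivity radius and the curvature bounds, then $x\mapsto \tfrac12 d(x,y)^2 - s\psi(x)$ has a unique minimizer near $y$, and $s\psi$ is the $c$-transform of its own $c$-transform. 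This is the argument used in McCann's paper and in Villani's treatment of small-potential optimal maps. It gives $\varphi_s\in\FOPT$ for $0\le s<\epsilon_0$. Negative $s$ is not needed, since $\GOPT$ is a group and contains the inverses.

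\emph{Step 2: Euler product formula.} Set $\varphi_s:=\expg(s\grad\psi)$. This is a smooth curve in $\Diff$ with $\varphi_0=\mathrm{id}$ and $\tfrac{d}{ds}\big|_{s=0}\varphi_s=\grad\psi$. The strong Trotter property of $\Diff$ \cite{Glockner2015}, in its general form for $C^1$ curves through the identity, then gives
\begin{align*}
\mathrm{e}^{\grad\psi}=\lim_{k\to\infty}\bigl(\expg(k^{-1}\grad\psi)\bigr)^{\circ k}
\end{align*}
in the $C^\infty$ topology. For $k>1/\epsilon_0$ each factor lies in $\FOPT$ by Step 1, so $\mathrm{e}^{\grad\psi}\in\overline{\GOPT}$. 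The same argument applied to $t\psi$ for every $t$ gives all flows $\mathrm{e}^{t\grad\psi}$, although $\FPOT$ already contains every potential.

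\emph{Main obstacle.} The delicate point is the precise form of the Trotter statement being invoked. The formula used in the proof of Theorem~\ref{thm:POT_Diff} concerns exponentials $\mathrm{e}^{X}$ of the Lie group, whereas here the factors are $\expg(k^{-1}\grad\psi)$, which are only first-order approximations of $\mathrm{e}^{k^{-1}\grad\psi}$. I would first try to cite the strong Trotter property in Gl\"ockner's sense: for any $C^1$ curve $\gamma$ with $\gamma(0)=\mathrm{id}$ and $\gamma'(0)=X$, one has $\gamma(t/k)^{k}\to\mathrm{e}^{tX}$ uniformly on compact $t$-sets. Gl\"ockner establishes this for $\mathrm{Diff}(\M)$ with $\M$ compact. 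If that citation is unavailable, the fallback is a direct estimate. One bounds $\expg(h\grad\psi)$ against $\mathrm{e}^{h\grad\psi}$ by $O(h^2)$ in every $C^m$ norm, and then sums the $k$ local errors using uniform $C^m$ bounds on the compositions, via a Gronwall-type argument in local charts.
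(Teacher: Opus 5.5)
Your proposal is correct and follows essentially the same route as the paper. Small multiples of $\psi$ are $\tfrac12 d^2$-convex (the paper cites Villani, Theorem 13.5), and $\expg(s\grad\psi)\in\Diff$ for small $s$ (the paper uses openness of $\Diff$ in $C^\infty(\M,\M)$ where you use $C^1$-closeness plus an isotopy); then the strong Trotter property \cite{Glockner2015} applied to $s\mapsto\expg(s\grad\psi)$ gives $\mathrm{e}^{\grad\psi}\in\overline{\GOPT}$, and Theorem~\ref{thm:POT_Diff} with $H=T\M$ concludes, so your fallback estimate is not needed.
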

\begin{proof} 
    Let $\psi\in\Cinf$ and, for any $k\geq 1$, let $\psi_k:=k^{-1}\psi$.
    For every $s\in[-1,1]$ and every $\ell\geq 0$, we have that
    $$d_{C^{\ell}}\bigl( \expg(s \grad \psi_{k}),\, \mathrm{id} \bigr)
\;\leq\; C_{\ell,\psi}\, k^{-1},$$
    where $d_{C^{\ell}}$ denotes the standard $C^{\ell}$-distance between $C^\infty$ maps on $\M$ in some atlas, and $C_{\ell,\psi}$ is some constant. Hence, in the limit $k\rightarrow\infty$, the map $\expg(s \grad \psi_k)$ converges uniformly for all $s\in[-1,1]$ to the identity map in the $C^\infty$ topology on $C^\infty(\M,\M)$, the space of $C^\infty$ maps from $\M$ to itself. Because $\Diff$ is open in $C^\infty(\M,\M)$ \cite{KrieglMichor1997}, and because the curve $s\mapsto \expg(s \grad \psi_k)$ is a smooth curve in $C^\infty(\M,\M)$ passing through the identity, there exists $k_1$ such that, for all $k\geq k_1$, $s\mapsto \expg(s \grad \psi_k)$ is a smooth curve in $\Diff$. On the other hand, by \cite[Theorem 13.5]{Villani2009}, there exists $k_2\geq 1$ such that, for all $k\geq k_2$, the function $\psi_k$ is \cconv{}. Combining both arguments, we obtain that, for all $k\geq k^\star_\psi:=\max\{k_1,k_2\}$, and all $s\in[-1,1]$,
    \begin{align}
        \expg(s\grad \psi_k) \in \FOPT,
    \end{align}
    In particular, the curve $\gamma: [-(k^\star_\psi)^{-1},(k^\star_\psi)^{-1}]\ni s\mapsto \gamma(s):\expg(s \grad \psi)\in\Diff$ is a smooth curve taking values in $\FOPT$ such that $\gamma(0)=\mathrm{id}$ and $\gamma'(0) = \grad \psi$. By the strong Trotter property \cite[pp. 7-8]{Glockner2015}, the limit
    $$
    \lim_{k\rightarrow \infty} \gamma\left(\tfrac{1}{k}\right)^{\circ k}  = \mathrm{e}^{\grad \psi}
    $$ 
    holds in the $C^\infty$-topology on $\Diff$ and, since each $\gamma\left(\tfrac{1}{k}\right)$ belongs to $\FOPT$ for all $k>k^\star_\psi$, we obtain that $\overline{\GOPT}$ contains all time-one flows of gradient fields, i.e., the family $\FPOT$ from the statement of Theorem \ref{thm:POT_Diff} with $H=T\M$. Therefore, $\GPOT\subset \overline{\GOPT}$ and, since $\overline{\GOPT}$ is a closed subgroup, we have that $\overline{\GPOT}\subset \overline{\GOPT}$. The conclusion follows by invoking Theorem \ref{thm:POT_Diff}, specialized to the Riemannian case.
\end{proof}
\section{Concluding Remarks}
Our main contribution has been to show that, under natural assumptions, horizontal gradient fields generate all vector fields via finite linear combinations, with constant coefficients, of iterated Lie brackets of uniformly bounded depth, recovering the Riemannian case trivially. We then used standard arguments to show the density of the group generated by time-one flows of horizontal gradient fields in $\Diff$, as well as the density of the group generated by diffeomorphic OMT maps in the Riemannian case. Because OMT maps in the sub-Riemannian setting are only almost everywhere approximately differentiable in general \cite{figalli2010mass}, a similar density statement for compositions of sub-Riemannian OMT maps is beyond the reach of the present approach. Nevertheless, combined with the non-holonomic Moser theorem of \cite{KhesinLee2009}, our results imply that the holonomy group in Otto's principal bundle construction \cite{otto2001geometry},\cite[Appendix A.5]{WendtKhesin2009} is dense in the structure group, even in the non-holonomic setting. Finally, we remark that the exact decompositions and uniform bounds supplied by Proposition~\ref{prop:bracket} and Proposition~\ref{prop:bracket_full} hint that the two nonlinear infinite-dimensional versions of Ballantine's problem discussed in the introduction may have solutions in an open neighborhood of the identity.

\section*{Acknowledgments}
The authors are grateful to Boris Khesin for his valuable feedback and his suggestion to extend an earlier version of this manuscript to the sub-Riemannian setting.

\bibliographystyle{unsrt}
\bibliography{refs}

\end{document}